\newtheorem{lemma}{Lemma}
\newtheorem{theorem}[lemma]{Theorem}
\newtheorem{proposition}[lemma]{Proposition}
\newtheorem{corollary}[lemma]{Corollary}
\newtheorem{example}{Example}
\numberwithin{equation}{section}
\newtheorem{remark}[lemma]{Remark}
\newtheorem{definition}{Definition}
\newtheorem{notation}{Notation}
\newtheorem{case}{Case}
\def\Sm0#1{{{\rm GL}}(1,#1)}
\title[Bipartite divisor graph for character degrees]
{Bipartite divisor graph for the set of irreducible character degrees}
\author{Roghayeh Hafezieh}
\address{Roghayeh Hafezieh, Department of Mathematics, Gebze Technical University, Gebze, Turkey}
\email{roghayeh@gyte.edu.tr}
\begin{document}

\maketitle

\begin{abstract}
Let $G$ be a finite group. We consider the set of the irreducible complex characters of $G$, namely $Irr(G)$, and the related degree set $cd(G)=\{\chi(1) : \chi\in Irr(G)\}$. Let $\rho(G)$ be the set of all primes which divide some character degree of $G$. In this paper we introduce the bipartite divisor graph for $cd(G)$ as an undirected bipartite graph with vertex set $\rho(G)\cup (cd(G)\setminus\{1\})$, such that an element $p$ of $\rho(G)$ is adjacent to an element $m$ of $cd(G)\setminus\{1\}$ if and only if $p$ divides $m$. We denote this graph simply by $B(G)$. Then by means of combinatorial properties of this graph, we discuss the structure of the group $G$. In particular, we consider the cases where $B(G)$ is a path or a cycle.
\end{abstract}

Classification: Primary 05C25; secondary 05C75.

Keywords: bipartite divisor graph, irreducible character degree, path, cycle.

\section{Introduction}
\label{sec:introd}
Let $G$ be a finite group. It is well known that the set of irreducible characters of $G$, denoted by $Irr(G)$, can be used to obtain information about the structure of the group $G$. The value of each character at the identity is the degree of the character and by $cd(G)$ we mean the set of all irreducible character degrees of $G$. Let $\rho(G)$ be the set of all primes which divide some character degree of $G$. When studying problems on character degrees, it is useful to attach the following graphs which have been widely studied to the sets $\rho(G)$ and $cd(G)\setminus\{1\}$, respectively.
\begin{itemize}
\item[(i)] Prime degree graph, namely $\Delta(G)$, which is an undirected graph whose set of vertices is $\rho(G)$; there is an edge between two different vertices $p$ and $q$ if $pq$ divides some degree in $cd(G)$.
\item[(ii)] Common divisor degree graph, namely $\Gamma(G)$, which is an undirected graph whose set of vertices is $cd(G)\setminus\{1\}$; there is an edge between two different vertices $m$ and $n$ if $(m,n)\neq1$.
\end{itemize}
The notion of the bipartite divisor graph was first introduced by Iranmanesh and Praeger in ~\cite{IP} for a finite set of positive integers. As an application of this graph in group theory, in ~\cite{BDIP}, the writers considered this graph for the set of conjugacy class sizes of a finite group and studied various properties of it. In particular they proved that the diameter of this graph is at most six, and classified those groups for which the bipartite divisor graphs of conjugacy class sizes have diameter exactly $6$.
Moreover, they showed that if the graph is acyclic, then the diameter is at most five and they classified the groups for which the graph is a path of length five. Similarly,  Taeri in ~\cite{T}
considered the case that the bipartite divisor graph of the set of conjugacy class sizes is a cycle and (by using the structure of $F$-groups and the classification of finite simple groups) proved that for a finite nonabelian group $G$, the bipartite divisor graph of the conjugacy class sizes is a cycle if and only if it is a cycle of length $6$, and for an abelian group $A$ and $q\in\{4,8\}$, $G\simeq A\times SL_2(q)$. Inspired by these papers, in this work we consider the bipartite divisor graph for the set of irreducible character degrees of a finite group and define it as follows:
\begin{definition}
Let $G$ be a finite group. The bipartite divisor graph for the set of irreducible character degrees of $G$, is an undirected bipartite graph with vertex set $\rho(G)\cup (cd(G)\setminus\{1\})$, such that an element $p$ of $\rho(G)$ is adjacent to an element $m$ of $cd(G)\setminus\{1\}$ if and only if $p$ divides $m$.
\end{definition}
Since classifying groups whose associated graphs have special graphical shapes is an important topic in this area, in this paper, we will discuss the cases where $B(G)$ is a path or a cycle for a group $G$.

In the second section, we suppose that $G$ is a solvable group. After finding the best upper bound for diameter of $B(G)$, we consider the case where $B(G)$ is a path of length $n$. We prove $n\leq 6$ and in Theorem ~\ref{thm:2}, which is the main theorem of this section, we give some group theoretical properties of such a group.

In the third section, we consider the case that $G$ is nonsolvable and $B(G)$ is a union of paths where by union of paths we mean that each connected components of $B(G)$ is a path, (so $B(G)$ is a path if there exists only one path in this union). Theorem ~\ref{thm:99} is the main theorem of this section.

Finally in section four, we consider the case where $B(G)$ is a cycle. We prove that $B(G)$ is a cycle if and only if $G$ is solvable and $B(G)$ is a cycle of length four or six. By using these properties, we find a special subgroup of $G$ which explains the structure of the irreducible character degrees of $G$. Theorem ~\ref{thm:98} is the main theorem of this section.

\begin{notation}
 For positive integers $m$ and $n$, we denote the greatest common divisor of $m$ and $n$ by
$(m,n)$; the number of connected components of a graph $\mathcal{G}$ by $n(\mathcal{G})$; the diameter of a graph $\mathcal{G}$ by $diam(\mathcal{G})$ (where by the diameter we mean the
maximum distance between vertices in the same connected component of the graph). If $\alpha$ is a vertex of the graph $\mathcal{G}$, then $deg_{\mathcal{G}}(\alpha)$ is the number of vertices adjacent to $\alpha$ in $\mathcal{G}$. If the graph is well-understood, then we show it by $deg(\alpha)$.
By length of a path or a cycle, we mean the number of edges in the path or in the cycle. Also, by $P_{n}$ and $C_{n}$ we mean a path of length $n$ and
a cycle of length $n$, respectively. Let
$G$ be a finite solvable group. As usual, we write $dl(G)$ and $h(G)$ to denote the derived length and Fitting height of $G$, respectively. Other notation
throughout the paper is standard.
\end{notation}
\section{ Solvable groups whose bipartite divisor graphs are paths }
\label{sec:Path}
We begin by giving the best upper bound for $diam(B(G))$.
\begin{theorem}
For a finite solvable group $G$, $diam(B(G))\leq 7$ and this bound is best possible.
\end{theorem}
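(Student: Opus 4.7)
The plan is to reduce the diameter of $B(G)$ to diameter bounds for the prime degree graph $\Delta(G)$ and the common divisor degree graph $\Gamma(G)$, both of which are classically bounded by $3$ for solvable groups, and then exhibit a solvable example realising the value $7$.

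First I would record two standard facts for a finite solvable group $G$: $diam(\Delta(G))\le 3$ (P\'alfy's three-prime condition combined with later work of Manz--Wolf) and $diam(\Gamma(G))\le 3$. Next I would record the elementary translation between $B(G)$ and these two graphs. Every edge $pq$ of $\Delta(G)$ witnesses a degree $m\in cd(G)$ with $pq\mid m$ and so lifts to a length-$2$ path $p-m-q$ in $B(G)$; dually, every edge $mn$ of $\Gamma(G)$ lifts to a length-$2$ path $m-p-n$ for any $p\mid(m,n)$. Conversely, projecting a shortest $B(G)$-path between two primes onto its prime vertices gives a walk of half the length in $\Delta(G)$, and analogously for two degrees in $\Gamma(G)$. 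Hence for two primes $p,q$ in the same component of $B(G)$,
\[
 d_{B(G)}(p,q)=2\,d_{\Delta(G)}(p,q)\le 6,
\]
and for two degrees $m,n$ in the same component, $d_{B(G)}(m,n)=2\,d_{\Gamma(G)}(m,n)\le 6$.

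For the mixed case, take $p\in\rho(G)$ and $m\in cd(G)\setminus\{1\}$ in the same component. Choosing any prime divisor $q$ of $m$ gives an edge $q-m$ in $B(G)$, and by the previous step $d_{B(G)}(p,q)\le 6$; hence $d_{B(G)}(p,m)\le 7$. Combining the three cases proves $diam(B(G))\le 7$. A refinement of the projection argument in fact gives the exact formula $d_{B(G)}(p,m)=2\min_{q\mid m}d_{\Delta(G)}(p,q)+1$, which is the identity that drives the sharpness statement.

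The main obstacle is realising the value $7$. I would exhibit (or invoke from the existing literature) a solvable group $G$ whose prime degree graph attains $diam(\Delta(G))=3$, together with a prime $p\in\rho(G)$ and a degree $m\in cd(G)$ such that \emph{every} prime divisor of $m$ lies at $\Delta(G)$-distance exactly $3$ from $p$. By the formula above, this forces $d_{B(G)}(p,m)=7$. The delicate step is engineering the example: starting from a known construction in which $\Delta(G)$ is an extremal path of length three, one chooses $p$ at one endpoint and arranges that some irreducible character degree has its prime support concentrated entirely at the opposite endpoint. Once such a $G$ is produced, sharpness is immediate from the prime-projection formula, and no further computation is needed.
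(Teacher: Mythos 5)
Your upper-bound argument is correct and is essentially the paper's: the paper quotes the bounds $\diam(\Delta(G))\le 3$ and $\diam(\Gamma(G))\le 3$ for solvable $G$ from Lewis's survey and then invokes Lemma~1 of Iranmanesh--Praeger, which is exactly the distance dictionary you re-derive by hand (paths in $B(G)$ between two primes have even length $2d_{\Delta}$, between two degrees even length $2d_{\Gamma}$, and in the mixed case $d_{B(G)}(p,m)=2\min_{q\mid m}d_{\Delta(G)}(p,q)+1\le 7$). Proving that lemma from scratch rather than citing it is harmless and makes the reduction self-contained.

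The genuine gap is in the sharpness half: you correctly isolate what is needed --- a solvable $G$ with a prime $p$ and a degree $m$ all of whose prime divisors are at $\Delta(G)$-distance $3$ from $p$ --- but you never produce such a group, and you yourself flag ``engineering the example'' as the delicate unresolved step. Note that $\diam(\Delta(G))=3$ alone only yields $\diam(B(G))\ge 6$; the extra condition on the prime support of some degree is exactly the hard part, and without a concrete witness the claim ``this bound is best possible'' is unproved. The paper closes this by citing Lewis's construction of a solvable group with character degree graph of diameter $3$, whose degree set is
\[
\{1,\,3,\,5,\,15,\,7\cdot 31\cdot 151,\,2^{7}\cdot 7\cdot 31\cdot 151,\,2^{12}\cdot 31\cdot 151,\,\dots,\,2^{15}\cdot 3\cdot 31\cdot 151\}.
\]
There one checks that $5$ is adjacent in $\Delta(G)$ only to $3$, so $d_{\Delta(G)}(7,5)=3$ (via $7-2-3-5$), and the degree $m=5$ has prime support $\{5\}$; your own formula then gives $d_{B(G)}(7,5)=7$. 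You should either reproduce this example (or an equivalent one) and verify the distance computation, or at minimum cite it precisely; as written, the second half of the theorem is a plan rather than a proof.
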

\begin{proof}
Since $G$ is solvable, by ~\cite[Corollary 4.2, Theorem 7.2]{L} we know that $diam(\Delta(G))$ and $diam(\Gamma(G))$ are both less than or equal to three. Now by ~\cite[Lemma 1]{IP} we have one of the following cases:
\begin{itemize}
\item[(i)] $diam(B(G))=2 max\{diam(\Delta(G)),diam(\Gamma(G))\}\leq 2\times 3 =6$ or
\item[(ii)] $diam(B(G))=2 diam(\Delta(G))+1= 2 diam(\Gamma(G))+1\leq (2\times 3)+1 =7$.
\end{itemize}
So in general, $diam(B(G))\leq 7$. Now let $G$ be the group as in ~\cite{1}, then

$cd(G)=\{1, 3, 5, 3\times 5, 7\times 31\times 151, 2^{7}\times 7\times 31\times 151, 2^{12}\times 31\times 151, 2^{12}\times 3\times 31\times 151,$
$$ 2^{12}\times 7\times 31\times 151, 2^{13}\times 7\times 31\times 151, 2^{15}\times 3\times 31\times 151\}$$
It is easy to see that $diam(\Delta(G))=diam(\Gamma(G))=3$ and $diam(B(G))=7$.
\end{proof}
\begin{proposition}\label{prop:1}
Let $G$ be a finite solvable group. Assume that $B(G)$ is a path of length $n$. Then $n\leq 6$ and $dl(G)\leq 5$. In particular, if $B(G)$ equals $P_5$ or $P_6$, then $h(G)$ is less than or equal $3$ or $4$, respectively. (In general, we know that for a solvable group $h(G)\leq |cd(G)|$.)
\end{proposition}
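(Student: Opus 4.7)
The plan is to translate the condition $B(G)=P_n$ into diameter and shape information on the classical character-degree graphs $\Delta(G)$ and $\Gamma(G)$, and then feed this into known solvable bounds. Since $B(G)$ is the path $P_n$, its diameter is exactly $n$, so Lemma~1 of \cite{IP} (as already invoked in the previous theorem) forces either $n=2\max\{\diam(\Delta(G)),\diam(\Gamma(G))\}$ or $n=2\diam(\Delta(G))+1=2\diam(\Gamma(G))+1$. Combined with the solvable bounds $\diam(\Delta(G)),\diam(\Gamma(G))\le 3$ from \cite{L}, this gives $n\le 7$ immediately.

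The first and hardest step is to exclude $n=7$. In that case the IP relation forces $\diam(\Delta(G))=\diam(\Gamma(G))=3$, and because $B(G)$ is itself a path (not merely of bounded diameter), the two projections onto the sides of the bipartition force $\Delta(G)$ and $\Gamma(G)$ to be paths of length $3$ each. A direct inspection of $P_7$ then gives $|\rho(G)|=|cd(G)\setminus\{1\}|=4$ together with a very rigid divisibility pattern: one endpoint of $P_7$ is a prime power, the opposite endpoint is a prime dividing exactly one character degree, and each intermediate degree is divisible by exactly two consecutive primes along the path $\Delta(G)$. I would rule out this configuration by combining the structural consequences of $\diam(\Delta(G))=3$ from Lewis's analysis in \cite{L} with the strong coprimality constraints on the four required degrees. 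This is the step I expect to require the most care, because the crude diameter inequality by itself only yields $n\le 7$.

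Once $n\le 6$ is secured, simple vertex-counting in $P_n$ shows $|cd(G)\setminus\{1\}|\le 4$, hence $|cd(G)|\le 5$. Taketa's inequality $dl(G)\le|cd(G)|$, which is known for solvable groups in this range, then yields $dl(G)\le 5$. For the Fitting-height assertions I would argue along the same lines: if $B(G)=P_5$ then $|cd(G)|=4$ and $\Delta(G)=P_2$, whereas if $B(G)=P_6$ then $|cd(G)|\in\{4,5\}$ and $\Delta(G)$ is either $P_2$ or $P_3$. Substituting these into the standard estimates relating the Fitting height to $\diam(\Delta(G))$ and $|cd(G)|$ for solvable groups (see \cite{L}), after a short case distinction on whether the endpoints of $P_n$ lie in $\rho(G)$ or in $cd(G)\setminus\{1\}$, gives $h(G)\le 3$ when $n=5$ and $h(G)\le 4$ when $n=6$, as required.
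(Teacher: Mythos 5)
Your overall route coincides with the paper's: pass from $B(G)=P_n$ to the statement that $\Delta(G)$ and $\Gamma(G)$ are paths, invoke the solvable diameter bounds from \cite{L}, and then count degrees. However, the two steps you yourself flag as delicate are exactly the ones where the paper leans on specific nontrivial theorems, and your proposed substitutes do not close them. To exclude $n=7$ you must show that $P_3$ (the path on four vertices) cannot be $\Delta(G)$ for a solvable group, and you propose to derive a contradiction from ``strong coprimality constraints on the four required degrees'' together with unspecified consequences of $diam(\Delta(G))=3$. This will not work as stated: the divisibility pattern forced by $B(G)=P_7$ (degrees of the form $p_1^{a}p_2^{b}$, $p_2^{c}p_3^{d}$, $p_3^{e}p_4^{f}$, $p_4^{g}$) is perfectly consistent with P\'alfy's three-prime condition and with every elementary coprimality restriction, so no contradiction falls out of the arithmetic alone. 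What is needed is the classification-type result quoted in the paper as \cite[Theorem 4.5]{L}, namely that a path of length three is never the prime degree graph of a solvable group; once that is cited, $\Delta(G)=P_m$ with $m\le 2$ together with the Iranmanesh--Praeger diameter relation gives $n\le 6$ exactly as in the paper.

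The Fitting height step has the same problem in sharper form. The ``standard estimate'' valid for all solvable groups is $h(G)\le |cd(G)|$, which for $B(G)=P_5$ (so $|cd(G)|=4$) and for $B(G)=P_6$ (so $|cd(G)|=5$, since the alternative $|\rho(G)|=4$ would again force $\Delta(G)=P_3$) yields only $h(G)\le 4$ and $h(G)\le 5$ --- each one worse than the bound claimed in the proposition --- and there is no standard inequality relating $h(G)$ to $diam(\Delta(G))$ that recovers the difference. The paper obtains $h(G)\le |cd(G)|-1$ for $|cd(G)|\ge 4$ from Riedl's theorem \cite{2}, and that citation (or an equivalent result) is indispensable here. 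The remaining steps of your proposal ($|cd(G)|\le 5$, hence $dl(G)\le 5$ via the Taketa-type bound known in this range, which is the paper's appeal to \cite{7}) are correct and agree with the paper.
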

\begin{proof}
Since $B(G)=P_{n}$, we can see that both $\Delta(G)$ and $\Gamma(G)$ are paths, (see ~\cite[Theorem 3]{IP}). Since $G$ is solvable, by ~\cite[Theorem 4.1]{L}, we conclude that $\Delta(G)=P_m$ where $m\leq 3$. Furthermore ~\cite[Theorem 4.5]{L} implies that a path of length three cannot be the prime degree graph of a solvable group, so $m\leq 2$. On the other hand we know that $|diam(\Delta(G))-diam(\Gamma(G))|\leq 1$, so $n\leq 6$ which implies that $|cd(G)|\leq 5$. Now by ~\cite{7}, we have $dl(G)\leq 5$. In the case that $B(G)$ equals $P_5$ or $P_6$, since $|cd(G)|\geq 4$, ~\cite{2} verifies that $h(G)\leq |cd(G)|-1$. Thus $h(G)\leq 3$ if $B(G)=P_5$ and $h(G)\leq 4$ if $B(G)=P_6$.
\end{proof}
\begin{theorem}\label{thm:2}
Let $G$ be a finite solvable group. Assume that $B(G)$ is a path of length $n$. Then we have the following cases:
\begin{itemize}
\item[(i)]$G\simeq P\times A$, where $P$ is a $p$-group for some prime number $p$ and $A$ is an abelian group.
\item[(ii)] There exist normal subgroups $N$ and $K$ of $G$ and a prime number $p$ with the following properties:
  \begin{itemize}
    \item[(a)] $\frac{G}{N}$ is abelian.
    \item[(b)] $\pi(G/K)\subseteq\rho(G)$.
    \item[(c)] Either $p$ divides all the nontrivial irreducible character degrees of $N$, or there exists a unique nontrivial $\psi(1)\in cd(N)$ such that $[G:N]\psi(1)\in cd(G)$.
  \end{itemize}
\item[(iii)] $cd(G)=\{1,p^{\alpha},q^{\beta},p^{\alpha}q^{\beta}\}$, where $p$ and $q$ are distinct primes.
\item[(iv)] There exists a prime $s$ such that $G$ has a normal $s$-complement.
\item[(v)] $G$ has an abelian normal subgroup $N$ such that $[G : N] = m\in cd(G)^{*}$.
\end{itemize}
\end{theorem}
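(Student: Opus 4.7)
My plan is a case analysis on $n$. By Proposition~\ref{prop:1} we have $n\le 6$, and \cite[Theorem~3]{IP} ensures that, when $B(G)=P_n$, both $\Delta(G)$ and $\Gamma(G)$ are themselves paths; since $G$ is solvable, \cite[Theorems~4.1 and 4.5]{L} further restrict $\Delta(G)$ to a path of length at most $2$, so $|\rho(G)|\le 3$ and $|cd(G)|\le 5$. Together with the information of which type of vertex (prime or degree) sits at each endpoint of $B(G)$, this reduces the problem to a short explicit list of configurations for each admissible $n$.

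The first reduction handles $|\rho(G)|=1$: every nontrivial character degree is a power of a single prime $p$, so It\^o's theorem yields a normal abelian $p$-complement $A$; a short argument using that $G/A$ is a $p$-group and that every character of $G$ lying above $1_A$ already has $p$-power degree promotes the semidirect product to the direct product $G=P\times A$ of case~(i). When $|\rho(G)|=2$ and the path has length $4$, the alternating shape $p^\alpha-p-p^\alpha q^\beta-q-q^\beta$ is forced by the bipartite adjacency rule, which is exactly case~(iii); and when $|cd(G)|=2$, Gallagher's theorem produces an abelian normal subgroup whose index is the unique nontrivial degree, giving case~(v).

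The remaining configurations are the deep ones. When $\Delta(G)$ is disconnected---two isolated primes linked only via $\Gamma(G)$---the standard structure theorems for solvable groups with disconnected prime degree graph (in the form recorded in~\cite{L}) supply a normal Hall subgroup and hence the normal $s$-complement of case~(iv). For $B(G)\in\{P_5,P_6\}$ I would work toward case~(ii) by letting $N=G'$, so property~(a) is immediate, and then defining $K$ as the intersection of the kernels of those irreducible characters whose degrees form a chosen terminal segment of the path; property~(b) should then follow by a direct computation of $\pi(G/K)$ from the graph.

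The principal obstacle is clause~(c) of case~(ii): the dichotomy that either $p$ divides every $\psi(1)\in cd(N)\setminus\{1\}$, or there is a \emph{unique} $\psi(1)\in cd(N)\setminus\{1\}$ with $[G:N]\psi(1)\in cd(G)$. This will require Clifford theory for the abelian quotient $G/N$, together with Gallagher's theorem, to control which $\chi\in\irr{G}$ lie above a given $\psi\in\irr{N}$; the path shape of $B(G)$ must then be translated into a restriction on how many distinct induced degrees $[G:N]\psi(1)$ can actually occur. Matching the combinatorics of $P_n$ to this Clifford-theoretic bookkeeping, and in particular forcing \emph{uniqueness} of the exceptional $\psi$, is where I expect the most delicate work.
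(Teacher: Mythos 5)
The central gap is in your treatment of cases (ii) and (iii), which are the heart of the theorem. The paper does not take $N=G'$ and $K$ an intersection of character kernels; instead it chooses $K$ \emph{maximal} with respect to $G/K$ being nonabelian (after first checking such a $K>1$ exists when $n\geq 4$), so that $(G/K)'$ is the unique minimal normal subgroup of $G/K$ and \cite[Lemma 12.3]{5} applies: either $G/K$ is a $p$-group or it is a Frobenius group with abelian complement of order $f\in cd(G)$ and elementary abelian kernel $N/K=(G/K)'$. The subgroup $N$ of clause (ii) is the preimage of this Frobenius kernel, and clause (c) then falls out of \cite[Theorem 12.4]{5} (for a nonlinear $\psi\in{\rm Irr}(N)$, either $s\mid\psi(1)$ or $[G:N]\psi(1)\in cd(G)$) combined with the path shape of $B(G)$, which leaves room for at most one product $[G:N]\psi(1)$ in $cd(G)$; abelianness of $G/N$ comes from Gallagher. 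With your choice $N=G'$ the quantity $[G:N]$ need not even lie in $cd(G)$ and there is no analogue of Theorem 12.4, so the dichotomy in (c) — which you yourself flag as the unresolved ``principal obstacle'' — has no visible route to a proof. This is exactly the missing key lemma, and without it the $P_5$ and $P_6$ cases are not established.

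There are also two concrete errors in the easier cases. For $|\rho(G)|=1$ you claim a short argument always promotes $G$ to the direct product $P\times A$ of case (i); this is false, as $S_3$ shows ($cd(S_3)=\{1,2\}$, $B(S_3)=P_1$, but $S_3$ is not a direct product of a $2$-group with an abelian group). The correct statement is the dichotomy of \cite[Theorem 12.5]{5}: either case (i) or case (v) (abelian normal subgroup of index $m$). For $P_4$ with three nontrivial degrees $m-q-l-p-n$, the graph forces $m=q^{\beta}$, $n=p^{\alpha}$ and $l=p^{\alpha'}q^{\beta'}$, but it does \emph{not} force $l=mn$; the paper obtains $l=mn$ (case (iii)) only in the subcase where $G/K$ is an $s$-group, via Gallagher, and lands in case (ii) otherwise. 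Finally, your route to case (iv) via ``disconnected $\Delta(G)$'' cannot occur here: since $B(G)$ is a path it is connected, hence so is $\Delta(G)$; case (iv) instead comes from a single prime (the middle vertex of $P_3$, or the central prime of $P_4$ with $cd(G)=\{1,m,n\}$) dividing every nonlinear degree, whence a normal complement by \cite[Corollary 12.2]{5}.
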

\begin{proof}
Since $B(G)$ is a path of length $n$, Proposition~\ref{prop:1} implies that $n\leq 6$.

First suppose that $n\geq 4$. This implies that nonlinear irreducible character degrees of the solvable group $G$ are not all equal.
We claim that there exists a normal subgroup $K>1$ of $G$ such that $\frac{G}{K}$ is nonabelian.
If $G'$ is not a minimal normal subgroup of $G$, then $G$ has a nontrivial normal subgroup $N$ such that $\frac{G}{N}$ is nonabelian. Otherwise, if $G'$ is a minimal normal subgroup of $G$, then it cannot be unique since nonlinear irreducible character degrees of the solvable group $G$ are not all equal, ~\cite[Lemma 12.3]{5}. So we can see that $G$ has a nontrivial normal subgroup $N$ such that $\frac{G}{N}$ is nonabelian. Let $K$ be maximal with respect to the property that $\frac{G}{K}$ is nonabelian. It is clear that $(\frac{G}{K})'$ is the unique minimal normal subgroup of $\frac{G}{K}$. Thus $\frac{G}{K}$ satisfies the hypothesis of ~\cite[Lemma 12.3]{5}. So all nonlinear irreducible characters of $\frac{G}{K}$ have equal degree $f$ and we have the following cases:

\begin{case}
 $B(G)=P_6$.
  \end{case}
  In this case $|\rho(G)|$ is either $3$ or $4$. By ~\cite[Theorem 4.5]{L}, $\Delta(G)$ cannot be a path of length $3$, so the case $|\rho(G)|=4$ is impossible. We may assume that $B(G): m-p-n-q-l-r-k$, where $p$, $q$, and $r$ are distinct prime numbers. We claim that $\frac{G}{K}$ is not an $s$-group for a prime $s$. If not, then $s\in\{p,q,r\}$. As $cd(G)$ does not contain a degree which is a power of $q$, it is clear that $s \neq q$. Thus either $s=p$ and $cd(\frac{G}{K})=\{1,f=m=p^{\alpha}\}$ for a positive integer $\alpha$ or $s=r$ and $cd(\frac{G}{K})=\{1,f=k=r^{\beta}\}$ for a positive integer $\beta$. Since the roles of $p$ and $r$ are the same, without loss of generality, we may assume that $s=p$ and there exists $\theta\in Irr(\frac{G}{K})$ such that $\theta(1)=p^{\alpha}=m$. Let $\chi\in Irr(G)$ with $\chi(1)=k$. Since $p$ does not divide $\chi(1)$, we have $\chi_{K}\in Irr(K)$. Now by Gallagher's Theorem ~\cite{5}, we conclude that $\chi\theta\in Irr(G)$, which is distinct from $\theta$. Thus $\chi(1)\theta(1)=mk\in cd(G)^{*}$, so either $mk=l$ or $mk=n$. Since $(n,k)=1$ and $(l,m)=1$, none of these cases are possible, so $\frac{G}{K}$ is not an $s$-group. Now ~\cite[Lemma 12.3]{5} implies that $\frac{G}{K}$ is a Frobenius group with abelian Frobenius complement of order $f$ and Frobenius kernel $\frac{N}{K}=(\frac{G}{K})'$ which is an elementary abelian $s$-group for some prime $s$. Thus $[G/K:N/K] = [G:N] = f\in cd(\frac{G}{K})\subseteq cd(G) =\{1,m,n,k,l\}$. It is clear that $N$ is not abelian. First suppose that $f=m$ and let $\chi$ be a nonlinear irreducible character of $G$ with $\chi(1)=k$. If $s\notin \rho(G)$, then $(\chi(1),[G:K])=1$. This verifies that $\chi_{K}\in Irr(K)$ and by Gallagher's Theorem ~\cite{5} we have $\chi(1)f=km\in cd(G)$ which is impossible. Therefore $\pi(G/K)\subseteq \rho(G)$. Let $\psi$ be a nonlinear irreducible character of $N$. If $s$ does not divide $\psi(1)$, then by ~\cite[Theorem 12.4]{5} we conclude that $[G:N]\psi(1)\in cd(G)$. Now for any $1\neq \zeta(1)\in cd(N)$ which is different from $\psi(1)$, $[G:N]\psi(1)\notin cd(G)$. This implies that $s | \zeta(1)$ and $\psi(1)$ is a unique element in $cd(N)^{*}$ with respect to the property that $[G:N]\psi(1)\in cd(G)$. If $w$ is a nontrivial element of $cd(\frac{G}{N})$, then by Gallagher's Theorem ~\cite{5} we can see that $kw\in cd(G)$ which is impossible. Thus $\frac{G}{N}$ is abelian and case $(ii)$ holds. The case $f=k$ is similar.

 Now assume $f=n$. Suppose that $\psi\in Irr(N)$ with $\psi(1)\neq 1$. Since $(l,m)=1$, $(m,k)=1$ and $(n,k)=1$, we can see that $[G:N]\psi(1)\notin cd(G)$. Now by
~\cite[Theorem 12.4]{5}, we conclude that $s | \psi(1)$. Since $\psi\in Irr(N)$ was arbitrary, we deduce that $s$ divides all nontrivial irreducible character degrees of $N$. Furthermore, this implies that $\pi(\frac{G}{K})\subseteq \rho(G)$. As $\frac{G}{K}$ is Frobenius, we have $(s,[G:N])=1$. Since $f=n$, we have $s=r$. 
If $w$ is a nontrivial element of $cd(\frac{G}{N})$, then Gallagher's Theorem ~\cite{5} implies that $kw\in cd(G)$, so $kw=l$. This forces $w$ to be a power of $q$ which is impossible since $cd(\frac{G}{N})\subseteq cd(G)$. Thus $\frac{G}{N}$ is abelian and case $(ii)$ holds. The case $f=l$ is similar.
\begin{case}
 $B(G)=P_5$.
  \end{case}
  We may assume that $B(G): p-m-q-l-r-n$, where $p$, $q$, and $r$ are distinct prime numbers. We claim that $\frac{G}{K}$ is not an $s$-group. If $\frac{G}{K}$ is an $s$-group for some prime $s$, then there exists $\eta\in Irr(\frac{G}{K})$ such that $\eta(1)=s^{\alpha}\in cd(G)^{*}=\{m,n,l\}$. According to the form of $B(G)$, we can see that $s=r$ and $\eta(1)=r^{\alpha}=n$, (since $n$ has only one prime divisor). Let $\chi\in Irr(G)$ with $\chi(1)=m$. Since $(m,n)=1$, $r$ does not divide $m$. Thus $\chi_{K}\in Irr(K)$.
Now by Gallagher's Theorem ~\cite{5}, we conclude that $\chi(1)\eta(1)=mn\in cd(G)^{*}$, so $l=mn$. But in this case $l$ has three distinct prime divisors, which contradicts the form of $B(G)$. Hence $\frac{G}{K}$ is not an $s$-group. Now ~\cite[Lemma 12.3]{5} implies that $\frac{G}{K}$ is a Frobenius group with abelian Frobenius complement of order $f$ and Frobenius kernel $\frac{N}{K}=(\frac{G}{K})'$ which is an elementary abelian $s$-group for some prime $s$. So $[G/K:N/K] = [G:N] = f\in cd(\frac{G}{K})\subseteq cd(G) =\{1,m,n,l\}$.
Similar to the previous case, we can see that for $f=n$, case $(ii)$ occurs.
So suppose that $f\neq n$. If $\psi\in Irr(N)$ with $\psi(1)\neq 1$, then $[G:N]\psi(1)\notin cd(G)$. Now ~\cite[Theorem 12.4]{5} implies that $s | \psi(1)$. So either $f=l$ and $s=p$ or $f=m$ and $s=r$. Let $f=l$, $s=p$ and $\zeta$ be an irreducible constituent of $\psi^{G}$. Then $\zeta(1)=m$. Since $([G:N],\zeta(1))=1$, we have $\zeta_{N}\in Irr(N)$. Thus $\zeta_{N}=\psi$ and $\psi(1)=m$. Since any character degree of $\frac{G}{N}$ must divide the order of the group, $\frac{G}{N}$ is abelian and case $(ii)$ occurs. The case $f=m$ and $s=r$ is similar.

\begin{case}
 $B(G)=P_4$.
 \end{case}
 First suppose that $B(G): p-m-q-n-r$, where $p$, $q$, and $r$ are distinct prime numbers. Since $q$ divides every nonlinear character degree, $G$ has a normal $q$-complement, (~\cite[Corollary 12.2]{5}). Thus case $(iv)$ occurs with $s=q$.

 Now suppose that $cd(G)=\{1,m,n,l\}$. We may assume that $B(G): m-q-l-p-n$, where $p$ and $q$ are distinct prime numbers.
Suppose $\frac{G}{K}$ is a Frobenius group with abelian Frobenius complement of order $f$ and Frobenius kernel $\frac{N}{K}=(\frac{G}{K})'$ which is an elementary abelian $s$-group for some prime $s$. If $f=m$ and $s\notin \rho(G)$, then by Gallagher's Theorem ~\cite{5} we can conclude that $cd(G)=\{1,p^{\alpha},q^{\beta},p^{\alpha}q^{\beta}\}$, so case $(iii)$ occurs. If $f=m$ and $s\in \rho(G)$, then similar to the previous cases we can see that $s$ divides each nonlinear irreducible character degree of $N$ and $\frac{G}{N}$ is abelian. Thus case $(ii)$ occurs. The case $f=n$ is similar.
Let $f=l$. Since $(s,f)=1$ and $\frac{N}{K}$ is nontrivial, we have $s\notin \rho(G)$. On the other hand, for $\psi\in Irr(N)$ with $\psi(1)\neq 1$, $[G:N]\psi(1)\notin cd(G)$. ~\cite[Theorem 12.4]{5} implies that $s | \psi(1)$ and $s\in \rho(G)$ which is a contradiction. So $N$ is abelian. Since $N$ is normal and abelian, $\chi(1)$ divides $[G:N]=f$, for each $\chi\in Irr(G)$, which contradicts the form of $B(G)$. So $f\neq l$.

Now suppose $\frac{G}{K}$ is an $s$-group for a prime $s$. It is obvious that $f\neq l$. Without loss of generality we may assume that there exists $\eta\in Irr(\frac{G}{K})$ such that $\eta(1)=p^{\alpha}=n$. Let $\chi\in Irr(G)$ with $\chi(1)=m$. As $p$ does not divide $m$, we have $\chi_{K}\in Irr(K)$. Now by Gallagher's Theorem ~\cite{5}, we conclude that $\chi(1)\eta(1)=mn\in cd(G)^{*}$, so $l=mn$. Thus $cd(G)=\{1,p^{\alpha},q^{\beta},p^{\alpha}q^{\beta}\}$ and case $(iii)$ holds. It should be mentioned that such a group exists. We can see that for $G=S_3\rtimes A_4$ which is a solvable group we have $cd(G)=\{1,2,3,6\}$.

Now suppose that $B(G)$ is a path of length $n$ where $n\leq 3$. We have the following cases:
\begin{case}
$B(G)=P_3$.
 \end{case}
 Assume $B(G): p-m-q-n$, where $p$ and $q$ are distinct prime numbers. Since $q$ divides every nonlinear character degree of $G$, we conclude that $G$ has a normal $q$-complement, (~\cite[Corollary 12.2]{5}). Thus case $(iv)$ occurs with $s=q$.
 \begin{case}
 $B(G)=P_2$.
  \end{case}
  Assume first that $cd(G)=\{1,m\}$ where $m$ is not a prime power. Then $m=p^{a}q^{b}$ for some primes $p\neq q$ and integers $a,b\geq 1$. Now by ~\cite[Theorem 12.5]{5}, $G$ has an abelian normal subgroup $N$ such that $[G:N]=m$, so case $(v)$ holds.

If $cd(G)=\{1,m,n\}$, then both $1<m<n$ are powers of a prime $s$ and so $G$ has a normal $s$-complement and case $(iv)$ holds.
\begin{case}
 $B(G)=P_1$.
  \end{case}
  Thus we have $cd(G)=\{1,p^{a}\}$ for some prime $p$ and integer $a\geq 1$. Now ~\cite[Theorem 12.5]{5} implies that either $G\simeq P\times A$, where $P$ is a $p$-group, $A$ is abelian and case $(i)$ holds, or $G$ has an abelian normal subgroup of index $p^{a}$ and case $(v)$ occurs with $m=p^{a}$.
\end{proof}
According to the proof of Theorem ~\ref{thm:2}, we have the following remarks.
\begin{remark}
Suppose that $G$ is a finite group and $q\in \rho(G)$ is an end point (a vertex of degree one) in $B(G)$. So $G$ is a finite group with exactly one irreducible character degree $m$ which is divisible by $q$. Let $Q\in Syl_{q}(G)$, then either $Q\lhd G$ or $U:= O_{q}(G)< Q$. In the second case, by ~\cite[Theorem A]{LMNH}, we have the following properties:
\begin{itemize}
\item[(i)] $U$ is abelian.
\item[(ii)] $\frac{Q}{U}$ is cyclic.
\item[(iii)] $|\frac{Q}{U}|=m_{q}$, which is the $q$-part of $m$.
\item[(iv)] $\frac{Q}{U}$ is a $TI$-set in $\frac{G}{U}$.
\end{itemize}
Now suppose that $G$ is a finite group which is not nilpotent and $B(G)=P_n$ is a path of length $n$. It is clear that $B(G)$ has at least one prime as an end point if and only if $n=1$, $n=2$ with $cd(G)=\{1,m\}$, $n=3$, $n=4$ with $cd(G)=\{1,m,n\}$ or $n=5$. Since $G$ is not nilpotent, there exists a prime $q\in\rho(G)$ such that $O_{q}(G)$ has the above properties.
\end{remark}
\begin{remark}
Suppose $G$ is a finite group such that $B(G)$ is $P_{6}$, $P_{5}$ or $P_{4}$ with $cd(G)=\{1,m,n\}$. There exist two distinct primes $p$ and $r$ such that they are not neighbors in the graph $\Delta(G)$. Now ~\cite[Theorem 5.1]{L} verifies that $l_{p}(G)\leq 2$ and $l_{r}(G)\leq 2$.
\end{remark}
\section{ Nonsolvable groups whose bipartite divisor graphs are union of paths}
\label{sec:nonsolvable}
Let $G$ be a finite nonsolvable group. By ~\cite[Theorem 6,4]{L}, we know that $\Delta(G)$ has at most three connected components. Since ~\cite{IP} implies that $n(B(G))=n(\Gamma(G))=n(\Delta(G))$, we conclude that $n(B(G))\leq 3$. In the rest of this section, we consider the case where each connected component of $B(G)$ is a path and we start by looking at simple groups.
\begin{lemma}
For a nonabelian simple group $S$, $B(S)$ is disconnected and all the connected components are paths if and only if $S$ is isomorphic to one of the following groups:
\begin{itemize}
\item[(i)] $PSL(2,2^{n})$ where $|\pi(2^{n}\pm 1)|\leq 2$;
\item[(ii)] $PSL(2,p^{n})$ where $p$ is an odd prime and $|\pi(p^{n}\pm 1)|\leq 2$.
\end{itemize}
\end{lemma}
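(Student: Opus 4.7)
The plan is to use the bridge from \cite{IP} between $B(S)$ and the two classical graphs $\Delta(S)$ and $\Gamma(S)$, together with known results on the character degree sets of simple groups. Since $n(B(S))=n(\Delta(S))=n(\Gamma(S))$, the assumption that $B(S)$ is disconnected forces $\Delta(S)$ to be disconnected. I would then invoke the known classification of nonabelian simple groups whose character degree graph is disconnected; the surviving candidates are, up to a small explicit list that can be read off from the Atlas, the groups $PSL(2,q)$ together with a handful of further families such as the Suzuki groups $Sz(q)$, $PSL(3,q)$, $PSU(3,q)$, ${}^{2}G_{2}(q)$, and a few small alternating or sporadic groups.

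For the non-$PSL(2,q)$ candidates the program is quick: list $cd(S)$ from the character table, and observe that several of the degrees are compound expressions involving three or more distinct prime divisors (for instance $(q-1)(q+r+1)$ in $Sz(q)$). Any such degree-vertex is adjacent in $B(S)$ to at least three prime vertices, so the component containing it cannot be a path, since a vertex of degree at least three cannot appear in $P_n$. This rules out the non-$PSL(2,q)$ candidates.

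For $S=PSL(2,q)$ with $q=p^{n}$, the degree set consists of $1$, the Steinberg degree $q$, the two degrees $q\pm 1$, and, when $p$ is odd, the exceptional half-degrees $(q\pm 1)/2$. The component of $B(S)$ through $q$ reduces to the single edge joining $p$ to $q$, because $(q,q\pm 1)=1$ and no other degree is divisible by $p$; so this component is the path $P_{1}$ regardless of $q$. When $q$ is even, $q-1$ and $q+1$ are coprime odd integers, so the corresponding subgraphs of $B(S)$ are disjoint stars with centers $q-1$ and $q+1$, and each is a path exactly when it has at most two leaves, that is, precisely when $|\pi(q\pm 1)|\le 2$.

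When $q$ is odd the analysis is more delicate: the prime $2$ lies in both $\pi(q-1)$ and $\pi(q+1)$, so the two degrees $q-1$ and $q+1$ share the vertex $2$ and sit in a single component, to which the half-degrees $(q\pm 1)/2$ contribute further adjacencies. I would analyze the degrees of the prime vertices in $\{2\}\cup\pi(q-1)\cup\pi(q+1)$: the vertex $2$ is adjacent to $q-1$, $q+1$, and possibly one or both of $(q\pm 1)/2$, while each odd prime $r$ dividing $q-1$ is adjacent to $q-1$ and, when $r\mid (q-1)/2$, also to $(q-1)/2$, and symmetrically for $q+1$. A short case check based on the $2$-adic valuations of $q\pm 1$ shows that the resulting component is a path exactly when $|\pi(q\pm 1)|\le 2$, and conversely that if $|\pi(q-1)|\ge 3$ or $|\pi(q+1)|\ge 3$ then some prime vertex acquires degree at least three in $B(S)$. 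The hard part will be precisely this odd-$q$ bookkeeping, since the vertex $2$ tends to accumulate neighbours and one must verify in each parity case that its degree stays at most two exactly under the stated hypothesis; a secondary difficulty is making the elimination of the non-$PSL(2,q)$ families airtight for the smallest parameter values, where the degree formulas can degenerate and have to be checked directly from the character tables.
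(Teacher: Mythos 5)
Your route is genuinely different from the paper's, and it is worth saying what each buys. The paper observes that a union of paths forces $\Delta(S)$ to be triangle-free and then quotes Tong-Viet's classification (\cite[Lemma 3.1]{H}) of nonabelian simple groups with triangle-free degree graph; that lemma already outputs exactly the two families $PSL(2,q)$ \emph{with the arithmetic conditions} $|\pi(q\pm 1)|\le 2$ attached, so only the converse needs a short check. You instead enter through disconnectedness and propose to redo the arithmetic on $cd(PSL(2,q))$ by hand; for the converse direction this is actually more careful than the paper, which dismisses the verification that the components are paths as ``clear.'' However, the classification you invoke at the first step is not the right one: the list $PSL(2,q)$, $Sz(q)$, $PSL(3,q)$, $PSU(3,q)$, ${}^2G_2(q)$ plus small exceptions is the classification of simple groups whose Gruenberg--Kegel prime graph \emph{on element orders} is disconnected. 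For the character degree graph the correct statement is cleaner: a nonabelian simple group $S$ has disconnected $\Delta(S)$ if and only if $S\cong PSL(2,q)$ (see the survey \cite{L}). Either cite that result, in which case your elimination of $Sz(q)$ and the unitary and Ree families is vacuous, or supply a proof that your list exhausts the disconnected case for $\Delta$; as written that step is unsupported.

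The second and more serious problem is in your odd-$q$ bookkeeping. For $q$ odd, $cd(PSL(2,q))$ does \emph{not} contain both half-degrees $(q\pm 1)/2$: it contains exactly one of them, namely $(q+\epsilon)/2$ with $\epsilon=(-1)^{(q-1)/2}$, and this number is always odd because $4\mid q-\epsilon$. This is not a cosmetic slip. If both halves were degrees, the even one would make the prime vertex $2$ adjacent to three degree-vertices ($q-1$, $q+1$, and the even half-degree), so no odd $q$ would survive and the lemma would be false for case (ii). The analysis only closes because the unique surviving half-degree is odd, so that $2$ is adjacent precisely to $q-1$ and $q+1$, each odd prime of $q-\epsilon$ is adjacent to $q-\epsilon$ and $(q+\epsilon')/2$ when relevant, and the whole odd-part component becomes a path exactly under $|\pi(q\pm1)|\le 2$. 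You flag but do not resolve the degenerate small cases (e.g.\ $q+1\notin cd(PSL(2,5))$); these should be checked, though they do not threaten the statement. With the degree set corrected and the disconnectedness classification replaced by the correct one, your argument does go through.
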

\begin{proof}
By ~\cite{IP} we know that $n(B(S))=n(\Gamma(S))=n(\Delta(S))$. Since all connected components of $B(S)$ are paths, so are the connected components of $\Delta(S)$. This implies that $\Delta(S)$ has no triangles. Thus by ~\cite[Lemma 3.1]{H}, one of the following cases holds:
\begin{itemize}
\item[(i)] $S\simeq PSL(2,2^{n})$ where $|\pi(2^{n}\pm 1)|\leq 2$ and so $|\pi(S)|\leq 5$;
\item[(ii)] $S\simeq PSL(2,p^{n})$ where $p$ is an odd prime and $|\pi(p^{n}\pm 1)|\leq 2$ and so $|\pi(S)|\leq 4$.
\end{itemize}
Since $cd(PSL(2,2^{n}))=\{1,2^{n},2^{n}+1,2^{n}-1\}$, by ~\cite[Theorem 6.4]{L}, we conclude that $B(S)$ has three connected components while $B(S)$ has two connected components in the case $(ii)$. Also it is clear that in both cases all the connected components are paths.
\end{proof}
\begin{lemma}\label{lem:22}
Let $G$ be a finite group. Assume that $B(G)$ is a union of paths. If $|\rho(G)|=5$, then $G\simeq PSL(2,2^{n})\times A$, where $A$ is an abelian group and $|\pi(2^{n}\pm 1)|=2$.
\end{lemma}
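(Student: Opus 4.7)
The plan is to locate the unique nonabelian chief section of $G$, identify it via the preceding lemma, and then establish the direct-product decomposition.

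First I would set $M=\mathrm{Sol}(G)$ and let $L/M$ be a minimal normal subgroup of $G/M$, so that $L/M\cong S_1\times\cdots\times S_k$ with the $S_i$ isomorphic nonabelian simple groups; write $S=S_1$. I claim $k=1$. Any nonabelian simple group has at least four irreducible characters, so one can pick three distinct nontrivial $\theta_1,\theta_2,\theta_3\in Irr(S)$. If $k\geq 2$ then $\theta_1(1)$, $\theta_1(1)\theta_2(1)$, $\theta_1(1)\theta_3(1)$, $\theta_1(1)\theta_2(1)\theta_3(1)$ all lie in $cd(L/M)\subseteq cd(G)$, and every prime $p\mid \theta_1(1)$ is then adjacent in $B(G)$ to all four. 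This forces $\deg_{B(G)}(p)\geq 4$, contradicting that $B(G)$ is a union of paths (maximum vertex degree two). Hence $k=1$ and $L/M=S$.

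Second, from $cd(S)\subseteq cd(G)$ the graph $B(S)$ embeds in $B(G)$ on its own vertex set, so its components are again paths. The preceding lemma then forces $S\cong PSL(2,2^n)$ with $|\pi(2^n\pm 1)|\leq 2$, or $S\cong PSL(2,p^n)$ with odd $p$ and $|\pi(p^n\pm 1)|\leq 2$. In the second case $|\pi(S)|\leq 4$, so $|\rho(G)|=5$ would require the radical $M$ to supply a prime $r\notin\pi(S)$. A Clifford-theoretic analysis of a character $\chi\in Irr(G)$ with $r\mid\chi(1)$ then shows that some degree of $G$ must be divisible by $r$ together with at least two primes of $\pi(S)$, producing either a cycle or a vertex of degree greater than two in $B(G)$ -- a contradiction. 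This rules out the odd-$p$ case, and the equality $|\rho(G)|=1+|\pi(2^n-1)|+|\pi(2^n+1)|=5$ then forces $|\pi(2^n\pm 1)|=2$.

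Finally I would show $G\cong S\times A$ with $A$ abelian. Since $\rho(G)=\pi(S)$ and $cd(S)=\{1,2^n-1,2^n,2^n+1\}$, the subgraph of $B(G)$ already coincides with $B(S)$: any additional degree in $cd(G)$ would either raise some prime-vertex's degree above two or bridge previously separate components, both forbidden. Hence $cd(G)=cd(S)$. Combining this with Clifford theory applied to $M\trianglelefteq G$ and the pairwise coprimality of the nontrivial degrees in $cd(PSL(2,2^n))$, one deduces $M\leq Z(G)$, so $M$ is abelian. The constraint $|\pi(2^n\pm 1)|=2$ rules out $n\leq 2$, so the Schur multiplier of $PSL(2,2^n)$ is trivial and the central extension $1\to M\to G\to S\to 1$ splits, giving $G\cong S\times A$ with $A=M$ abelian. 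The principal obstacle will be this last step: verifying that Clifford correspondence over $M$ introduces no new character degrees (any such degree would combine an $M$-contribution with an $S$-contribution and exceed what the union-of-paths structure of $B(G)$ can absorb), and then invoking triviality of the Schur multiplier to conclude that the central extension is split.
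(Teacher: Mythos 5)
Your outline diverges from the paper, which disposes of this lemma almost immediately: since $B(G)$ is a union of paths, $\Delta(G)$ is triangle-free, and with $|\rho(G)|=5$ Tong-Viet's Theorem~B of \cite{H} (after first ruling out connectedness of $B(G)$) directly yields $G\simeq PSL(2,2^{n})\times A$ with $A$ abelian and $|\pi(2^{n}\pm 1)|=2$. Your plan instead tries to reprove that classification from scratch, and several of its steps do not hold as stated. The most concrete problem is the repeated use of $cd(L/M)\subseteq cd(G)$ and $cd(S)\subseteq cd(G)$ for a nonabelian chief factor $L/M\cong S^{k}$: for a normal section one only knows that each degree of $L/M$ \emph{divides} some degree of $G$, not that it \emph{is} one. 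Consequently $B(S)$ need not embed in $B(G)$ on its own vertex set, the count $\deg_{B(G)}(p)\geq 4$ in your $k\geq 2$ argument does not follow (all four products could divide a single member of $cd(G)$, leaving $p$ with small degree), and the preceding lemma cannot be applied to $S$ without further justification. What does survive is the weaker statement that $\Delta(S)$ is a subgraph of $\Delta(G)$, hence triangle-free, which is how both the preceding lemma and \cite{H} actually identify $S$.

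Beyond that, the three remaining steps --- eliminating $PSL(2,p^{n})$ with $p$ odd when $|\rho(G)|=5$, showing that $M=\mathrm{Sol}(G)$ is central and that nothing sits between $L$ and $G$ (you pass silently from $L/M\cong S$ to $G/M\cong S$), and concluding $cd(G)=cd(S)$ --- are each only gestured at (``a Clifford-theoretic analysis then shows\dots''), and together they constitute essentially the entire content of the theorem of \cite{H} that the paper invokes. As written the proposal is therefore not a proof; either supply these arguments in full or, as the paper does, quote the classification of nonsolvable groups whose prime degree graph has no triangles.
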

\begin{proof}
Since each connected component of $B(G)$ is a path, we can see that $\Delta(G)$ is triangle-free. We claim that $B(G)$ is disconnected. If $B(G)$ is connected, then $\Delta(G)$ is a path. Now ~\cite[Theorem B]{H} implies that $G\simeq H\times K$, where $H$ is isomorphic with $A_{5}$ or $PSL(2,8)$ and $K$ is a solvable group. This contradicts that $\Delta(G)$ is a path. Thus $n(\Delta(G))=n(B(G))>1$. By ~\cite[Theorem B]{H}, we have $G\simeq PSL(2,2^{n})\times A$, where $A$ is an abelian group and $|\pi(2^{n}\pm 1)|=2$. So $B(G)$ is a graph with three connected components, one of them is a path of length one and the other two components are paths of length two.
\end{proof}
\begin{theorem}~\label{thm:99}
Let $G$ be a finite nonsolvable group. Assume that $B(G)$ is a union of paths. Then $B(G)$ is disconnected and we have the following cases:
\begin{itemize}
\item[(1)] If $n(B(G))=2$, then let $\mathcal{C}_{1}$ and $\mathcal{C}_{2}$ be the connected components of $B(G)$.
We have $|\rho(G)|\in\{3,4\}$, $\mathcal{C}_{1}$ is a path of length one and $\mathcal{C}_{2}$ is isomorphic with $P_{n}$, where $n\in\{|\rho(G)|, |\rho(G)|+1\}$.
\item[(2)] If  $n(B(G))=3$, then $G\simeq PSL(2,2^{n})\times A$, where $A$ is an abelian group and $n\geq 2$.
\end{itemize}
\end{theorem}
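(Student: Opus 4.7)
The strategy is to combine the identity $n(B(G))=n(\Delta(G))=n(\Gamma(G))$ with the existing classifications of nonsolvable groups whose prime-degree graph is triangle-free, and then to reduce the remaining two subcases to the preceding lemmas by a bipartite counting argument.

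First I would establish that $B(G)$ must be disconnected. Assuming for contradiction that $B(G)$ is a single path, the graph $\Delta(G)$ is also a connected path, hence triangle-free. Applying~\cite[Theorem B]{H} to the nonsolvable group $G$ yields $G\simeq H\times K$ with $H\in\{A_5, PSL(2,8)\}$ and $K$ solvable. Direct inspection of $cd(A_5)=\{1,3,4,5\}$ and $cd(PSL(2,8))=\{1,7,8,9\}$ shows that the nonlinear contributions coming from $H$ use pairwise coprime prime sets, so $B(G)$ splits into at least two components no matter how $K$ enters. This contradicts connectedness and confirms the disconnectedness clause of the theorem.

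For case~(2), with $n(B(G))=3$, the bound $n(\Delta(G))\leq 3$ for nonsolvable $G$ from~\cite[Theorem 6.4]{L} is tight, and the structural consequence is that $|\rho(G)|\geq 5$. Since $B(G)$ is still a union of paths, Lemma~\ref{lem:22} applies verbatim to deliver $G\simeq PSL(2,2^n)\times A$ with $A$ abelian; the requirement $|\pi(2^n\pm 1)|=2$ automatically forces $n\geq 2$.

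For case~(1), with $n(B(G))=2$, I would rule out $|\rho(G)|=5$ by Lemma~\ref{lem:22} (which would instead give three components), while $|\rho(G)|\leq 2$ is impossible for a nonsolvable group. The triangle-free classification from~\cite[Theorem B]{H} constrains the simple section to $PSL(2,p^n)$ with $|\pi(p^n\pm 1)|\leq 2$, and when combined with any complementary solvable factor that preserves the two-component structure the total prime count cannot exceed four, so $|\rho(G)|\in\{3,4\}$. Since $B(G)$ is bipartite with two path components, I would then analyze the alternation: the isolated characteristic prime together with the unique degree it divides contributes a component isomorphic to $P_1$, which must be $\mathcal{C}_1$, and the remaining $|\rho(G)|-1$ primes together with the remaining nonlinear degrees populate $\mathcal{C}_2$, whose length is either $|\rho(G)|$ or $|\rho(G)|+1$ depending on whether the endpoints of the path lie on the prime side or the degree side.

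The main obstacle is the combinatorial analysis inside case~(1): one must verify that $\mathcal{C}_1$ is forced to be $P_1$ and that no alternative bipartite length for $\mathcal{C}_2$ can occur beyond $P_{|\rho(G)|}$ or $P_{|\rho(G)|+1}$. Handling this cleanly likely requires matching each candidate triangle-free/two-component nonsolvable structure coming from~\cite[Theorem B]{H} against the allowed bipartite path-length configurations, rather than invoking a single black-box classification result.
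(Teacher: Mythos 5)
Your overall skeleton (prove disconnectedness, then split on the number of components) is the same as the paper's, but two of your key steps do not go through as written. For the disconnectedness clause you apply \cite[Theorem B]{H} to \emph{every} nonsolvable group with triangle-free $\Delta(G)$ in order to extract a factor $H\in\{A_5,PSL(2,8)\}$; in the paper that theorem is only invoked (inside Lemma~\ref{lem:22}) under the hypothesis $|\rho(G)|=5$, and no such factorization is available when $|\rho(G)|\in\{3,4\}$. The paper has to rule out connectedness separately in each of those cases: for $|\rho(G)|=3$ it uses \cite[Corollary B]{MM} to get $|cd(G)|\geq 5$, which would force $\Gamma(G)$ to be a path of length three, impossible by \cite[Theorem A]{HQ}; for $|\rho(G)|=4$ a connected $B(G)$ would make $\Delta(G)$ a path of length three on four vertices, impossible for nonsolvable groups by \cite[Theorem B]{LW}. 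Without a replacement for these two steps, your proof of disconnectedness is incomplete.

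Second, in case (2) you claim that $n(B(G))=3$ forces $|\rho(G)|\geq 5$ and then apply Lemma~\ref{lem:22} ``verbatim.'' That implication is false: $A_5=PSL(2,4)$ has $cd(A_5)=\{1,3,4,5\}$, so $\Delta(A_5)$ is three isolated vertices and $|\rho(A_5)|=3$, yet $n(B(A_5))=3$. Lemma~\ref{lem:22} therefore does not apply; the correct (and shorter) route, taken in the paper, is the equivalence from \cite[Theorem 6.4]{L}: for a nonsolvable group, $n(\Delta(G))=3$ if and only if $G\simeq PSL(2,2^{n})\times A$ with $A$ abelian and $n\geq 2$. Finally, in case (1) your identification of $\mathcal{C}_1$ with $P_1$ via an ``isolated characteristic prime'' is not the mechanism that works: an isolated prime of $\Delta(G)$ could a priori sit in a star $K_{1,k}$ inside $B(G)$. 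The paper instead passes through $\Gamma(G)$ and \cite[Theorem 7.1]{L}, which guarantees that one component of $\Gamma(G)$ is an isolated \emph{degree}; it is that degree, together with its prime divisor, that yields the $P_1$ component. You would need to make this argument (and the resulting count giving $\mathcal{C}_2\simeq P_{|\rho(G)|}$ or $P_{|\rho(G)|+1}$) explicit.
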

\begin{proof}
Since $B(G)$ is a union of paths, we deduce that $\Delta(G)$ is triangle-free. Now ~\cite[Theorem A]{H} implies that $|\rho(G)|\leq 5$. In Lemma~\ref{lem:22} we saw that if $|\rho(G)|=5$, then $G\simeq PSL(2,2^{n})\times A$, where $A$ is an abelian group and $|\pi(2^{n}\pm 1)|=2$. This implies that $n(B(G))=3$. On the other hand, since $G$ is a nonsolvable group, by ~\cite[Theorem 6.4]{L} we conclude that $n(B(G))=3$ if and only if $G\simeq PSL(2,2^{n})\times A$, where $A$ is an abelian group and $n\geq 2$. So we may assume that $|\rho(G)|\leq 4$ and $n(B(G))\leq 2$. It is obvious that $|\rho(G)|> 1$. Since $G$ is nonsolvable, it must have a nonabelian chief factor say $M/K\simeq S^{k}$, where $S$ is a nonabelian simple group and $k$ is a positive integer. Now it follows from $It\hat{o}-Michler$ and $Burnside's$ $p^{a}q^{b}$ theorems that $|\rho(S)|\geq 3$ and thus $|\rho(G)|\geq |\rho(M/K)|=|\rho(S)|\geq 3$.
Now we have the following cases:
\begin{itemize}
\item[(i)] $|\rho(G)|=3$. First suppose that $n(B(G))=2$. Thus $n(\Gamma(G))=2$. Now ~\cite[Theorem 7.1]{L} implies that one of the connected components of $\Gamma(G)$ is an isolated vertex and the other one has diameter at most two. This isolated vertex generates a connected component in $B(G)$ which is a path of length one. We denote this component by $\mathcal{C}_{1}$. Let $\mathcal{C}_{2}$ be the other component of $B(G)$. Since $G$ is nonsolvable, we can easily see that $\mathcal{C}_{2}$ is either a path of length $3$ or $4$. Now suppose that $B(G)$ is connected. Since $G$ is nonsolvable, we have $|cd(G)|>3$. Now by ~\cite[Corollary B]{MM}, we can conclude that $|cd(G)|\geq 5$. This implies that $\Gamma(G)$ is a path of length $3$ which is impossible by ~\cite[Theorem A]{HQ}. Note that the case $\Gamma(G)=P_{4}$ is not possible as $|\rho(G)|=3$.
\item[(ii)] $|\rho(G)|=4$. If $B(G)$ is connected, then $\Delta(G)$ is a path of length three which is impossible by ~\cite[Theorem B]{LW}, so we may assume that $n(B(G))=2$. Similar to the previous case, we can see that $\mathcal{C}_{1}$ is a path of length one and $\mathcal{C}_{2}$ is either a path of length $4$ or $5$.
\end{itemize}
\end{proof}
\begin{example}
Let $G_{1}=M_{10}$ and $G_{2}=PSL(2,25)$. Since $cd(M_{10})=\{1,9,10,16\}$ and $cd(PSL(2,25))=\{1,13,24,25,26\}$, it is easy to see that $B(G_{i})$ has two connected components $\mathcal{C}_{i,1}$ and $\mathcal{C}_{i,2}$, for $i\in\{1,2\}$. $\mathcal{C}_{i,1}$ is a path of length one for each $i$, $\mathcal{C}_{1,2}$ is a path of length $3$ and $\mathcal{C}_{2,2}$ is a path of length $5$.
\end{example}
\section{ Groups whose bipartite divisor graphs are cycles}
\label{sec:Cycle}
\begin{lemma}\label{rem:01}
Let $G$ be a finite group whose $B(G)$ is a cycle of length $n\geq 6$. Then both $\Delta(G)$ and $\Gamma(G)$ are cycles.
\end{lemma}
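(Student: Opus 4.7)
The plan is to exploit the fact that in a cycle every vertex has valency exactly two. Since $B(G)$ is bipartite by construction, any cycle it contains has even length, so the hypothesis $n\ge 6$ forces $n=2k$ for some integer $k\ge 3$. I would label the vertices cyclically as $p_1,m_1,p_2,m_2,\ldots,p_k,m_k$, with $p_i\in\rho(G)$ and $m_i\in cd(G)\setminus\{1\}$, reading subscripts modulo $k$. The first concrete step is to translate the 2-regularity of $C_n$ into arithmetic: the $B(G)$-neighbors of $m_i$ are precisely $p_i$ and $p_{i+1}$, hence these are exactly the prime divisors of $m_i$ that lie in $\rho(G)$, which (as $m_i\ne 1$) is all of them; dually the elements of $cd(G)\setminus\{1\}$ divisible by $p_i$ are exactly $m_{i-1}$ and $m_i$.

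The second step is to read off the two projections. Two distinct primes $p_i,p_j\in\rho(G)$ are adjacent in $\Delta(G)$ iff some degree is divisible by $p_ip_j$, and by the previous step this forces $\{p_i,p_j\}=\{p_l,p_{l+1}\}$ for some $l$; hence each $p_i$ is joined in $\Delta(G)$ only to $p_{i-1}$ and $p_{i+1}$, yielding $\Delta(G)=C_k$. The mirror-image argument handles $\Gamma(G)$: the degrees $m_i,m_j$ satisfy $(m_i,m_j)\ne 1$ iff they share one of the two primes identified as their $B(G)$-neighbors, which happens exactly when they are cyclically consecutive in the labeling, giving $\Gamma(G)=C_k$ as well. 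Since $k\ge 3$, both projections are genuine cycles, as required.

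There is no substantive obstacle here; the only things to keep straight are the bipartiteness of $B(G)$ (so $n$ is forced even to begin with) and the cyclic bookkeeping of the indices modulo $k$. In fact the statement can also be viewed as a direct instance of the general comparison between $B(G)$ and its two projections supplied by \cite[Lemma~1]{IP}, which one could invoke in place of the explicit cycle-chasing just sketched; either route delivers the conclusion in a few lines.
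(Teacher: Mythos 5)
Your proposal is correct and takes essentially the same approach as the paper: both arguments exploit the $2$-regularity of $C_n$ together with $n\ge 6$ (so the two second-neighbors of any vertex are distinct) to conclude that every vertex of $\Delta(G)$ and of $\Gamma(G)$ has degree exactly two. The paper packages this as ``connected (via \cite[Lemma 3.1]{IP}) plus $2$-regular implies cycle,'' whereas you carry out the cyclic labeling explicitly, but the underlying computation is the same.
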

\begin{proof}
Suppose that $B(G)=C_{n}$ and $n\geq 6$. Let $\Phi\in\{\Delta(G),\Gamma(G)\}$. By graph theory we know that $\Phi$ is a cycle if and only if it is a connected graph such that every vertex in $\Phi$ has degree two. Since $n(B(G))=1$, ~\cite[Lemma 3.1]{IP} implies that $\Phi$ is a connected graph. Let $\alpha$ be a vertex of $\Phi$. It is clear that $deg_{B(G)}(\alpha)=2$. Since $n\geq 6$, one can see that $deg_{\Phi}(\alpha)=2$. Thus $\Phi$ is a cycle.
\end{proof}
\begin{theorem}\label{thm: 1}
Let $G$ be a finite group whose $B(G)$ is a cycle of length $n$. Then $n\in\{4,6\}$.
\end{theorem}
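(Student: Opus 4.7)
The plan is to argue by contradiction. Since $B(G)$ is bipartite, every cycle in it has even length, so $n$ is even and $n\geq 4$, and the task reduces to ruling out $n=2k$ with $k\geq 4$. Assume such a $k$ exists. Then $n\geq 8>6$, so Lemma~\ref{rem:01} applies and both $\Delta(G)$ and $\Gamma(G)$ are cycles of length $k$; in particular $|\rho(G)|=|cd(G)\setminus\{1\}|=k\geq 4$, every prime in $\rho(G)$ divides exactly two nontrivial character degrees, and every nontrivial degree is the product of exactly two prime powers.

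I would then split the argument according to whether $G$ is solvable or not. In the solvable case, P\'alfy's three-prime theorem says that among any three primes of $\rho(G)$ at least two are adjacent in $\Delta(G)$, that is, the complement of $\Delta(G)$ is triangle-free. But in $C_k$ with $k\geq 6$ the three alternating vertices $v_1,v_3,v_5$ are pairwise nonadjacent and form a triangle in the complement, contradicting P\'alfy. This reduces the solvable case to $k\in\{4,5\}$. To eliminate these two values I would exploit the simultaneous constraint $\Gamma(G)=C_k$: no two degrees share both of their primes, and no prime divides three degrees. Following the template of the proof of Theorem~\ref{thm:2}, I would take a maximal normal subgroup $K$ of $G$ with $G/K$ nonabelian, apply~\cite[Lemma 12.3]{5} to conclude that $G/K$ is either an $s$-group or a Frobenius group with a specific kernel, and then apply Gallagher's theorem to a character $\chi\in\mathrm{Irr}(G)$ whose degree is coprime to the degree $f$ of $G/K$: the product $f\chi(1)$ must lie in $cd(G)$ but is easily seen not to lie on the prescribed cycle, giving the contradiction.

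In the nonsolvable case, since a cycle is connected so are $\Delta(G)$ and $\Gamma(G)$, and being cycles of length $k\geq 4$ they are triangle-free. The classification in~\cite{H} of nonsolvable groups with triangle-free $\Delta(G)$ forces $G$ to have a direct factor isomorphic to $PSL(2,2^m)$ or $PSL(2,p^m)$ whose character degree set is explicit. Reading these degrees off, the four degrees $1,\,p^m,\,p^m-1,\,p^m+1$ already force either a triangle in $\Delta(G)$ or a vertex of $B(G)$ of degree at least three (the prime dividing $p^m$ and one of the twin degrees necessarily appears in a third degree coming from the abelian factor or another component), contradicting the hypothesis that $B(G)$ is a cycle of length $\geq 8$.

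The main obstacle is the elimination of $k=4$ and $k=5$ for solvable $G$: both survive P\'alfy's obstruction on $\Delta(G)$ (and on $\Gamma(G)$), so one must genuinely use the additional rigidity provided by $B(G)$ itself being the single cycle $C_{2k}$ and chase Gallagher's theorem through the short list of possibilities for $G/K$. Once $k\in\{4,5\}$ is ruled out only $k=3$ (that is, $n=6$) remains from the reduction under Lemma~\ref{rem:01}, and together with the case $n=4$ (which lies outside the hypothesis of that lemma) we conclude $n\in\{4,6\}$.
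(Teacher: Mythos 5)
Your reduction is sound as far as it goes: parity of a bipartite cycle gives $n$ even, Lemma~\ref{rem:01} turns $n\geq 8$ into $\Delta(G)\simeq\Gamma(G)\simeq C_k$ with $k=|\rho(G)|\geq 4$, and the P\'alfy three-prime observation (independence number of $C_k$ is at least $3$ for $k\geq 6$) is a genuinely more elementary way than the paper's appeal to \cite[Theorem C]{H} to kill $k\geq 6$ for solvable $G$. But the proof has a real gap exactly where you flag "the main obstacle": the cases $k=4$ and $k=5$ (i.e.\ $n=8$ and $n=10$) for solvable $G$ are never actually eliminated, only described as something you "would" do. Moreover the sketch you give for them does not work as stated: to apply Gallagher you need $\chi_K\in{\rm Irr}(K)$, which requires $(\chi(1),[G:K])=1$, and $[G:K]=f\cdot s^t$ includes the order of the Frobenius kernel, not just the complement degree $f$; when the kernel prime $s$ lies in $\rho(G)$ and divides $\chi(1)$, the restriction need not be irreducible, and one is forced into the \cite[Theorem 12.4]{5} case analysis of the proof of Theorem~\ref{thm:2}, which you have not carried out. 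The paper avoids all of this: \cite[Theorem C]{H} bounds $|\rho(G)|\leq 4$ outright (so $n=10$ never arises), and the single surviving case $\Delta(G)\simeq\Gamma(G)\simeq C_4$ is dispatched by the Lewis--Meng theorem \cite{LMe}, which forces $G\simeq H\times K$ with each factor contributing a prime-power degree, producing at least eight nontrivial degrees $m,n,l,k,ml,mk,nl,nk$ where the cycle allows only four.

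The nonsolvable branch is also thinner than it looks. A classification of nonsolvable groups with triangle-free $\Delta(G)$ as "direct factor $PSL(2,q)$ times something" is only available in \cite{H} for $|\rho(G)|=5$, where $\Delta(G)$ is disconnected and hence not a cycle; for $|\rho(G)|=4$ you need precisely the Lewis--White result \cite{LW} that $C_4$ is not the degree graph of a nonsolvable group, which is what the paper cites. Your heuristic about the degrees $q-1,q,q+1$ forcing a triangle or a degree-three vertex is plausible for the simple group itself but does not cover the almost simple and extension cases that \cite{LW} actually has to analyze. In short: correct skeleton, one nice elementary improvement (P\'alfy in place of Tong-Viet for large solvable cycles), but the decisive cases $n=8$ and $n=10$ are not proved.
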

\begin{proof}
Since $B(G)$ is a cycle of length $n$, it is clear that $n\geq 4$. Furthermore, by ~\cite[Theorem 3]{IP} both $\Delta(G)$ and $\Gamma(G)$ are acyclic if and only if $B(G)=C_{4}$. In this case $\Delta(G)\simeq\Gamma(G)\simeq P_2$. On the other hand, if $n\geq 6$, then Lemma ~\ref{rem:01} implies that both $\Delta(G)$ and $\Gamma(G)$ are cycles. So $\Delta(G)$ is either a cycle or a path (which is a tree). Now ~\cite[Theorem C]{H} implies that $\Delta(G)$ has at most four vertices. Thus $B(G)$ can be $C_4$, $C_6$ or $C_8$. We claim that $B(G)$ cannot be a cycle of length eight. Otherwise, if $B(G)=C_8$, then $\Delta(G)\simeq\Gamma(G)=C_4$. First suppose that $G$ is solvable. By the main theorem of ~\cite{LMe}, we have $G\simeq H\times K$, where $\rho(H)=\{p,q\}$, $\rho(K)=\{r,s\}$ and both $\Delta(H)$ and $\Delta(K)$ are disconnected graphs. This implies that there exists $m,n\in cd(H)^{*}$ and $l,k\in cd(K)^{*}$ such that $m=p^{\alpha}$, $n=q^{\beta}$, $l=r^{\gamma}$ and $k=s^{\delta}$, for some positive integers $\alpha$, $\beta$, $\gamma$ and $\delta$. By the structure of $G$ it is clear that $\{1, m, n, l, k, ml, mk, nl, nk\}\subseteq cd(G)$, which contradicts the form of $B(G)$. So in the solvable case, $B(G)$ is not a cycle of length eight. On the other hand, ~\cite{LW} implies that a square (i.e. $C_{4}$) cannot be the prime degree graph of a nonsolvable group. Thus when $G$ is nonsolvable $B(G)$ is not $C_8$.
\end{proof}
\begin{corollary}
Let $G$ be a finite group and $B(G)$ be a cycle. Then $G$ is solvable and $dl(G)\leq |cd(G)|\leq 4$.
\end{corollary}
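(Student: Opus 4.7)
The plan is to invoke Theorem~\ref{thm: 1} to restrict $B(G)$ to two possible shapes, rule out nonsolvability in each of them, and then quote a Taketa-type bound on the derived length. By Theorem~\ref{thm: 1} we have $B(G)\in\{C_{4},C_{6}\}$, and since the number of character-degree vertices of the cycle equals $|cd(G)\setminus\{1\}|$, this immediately yields $|cd(G)|\leq 4$ in both cases.

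The main step is to show that $G$ is solvable. I would argue by contradiction: if $G$ is nonsolvable, then $G$ has a nonabelian simple chief factor $S$, and Burnside's $p^{a}q^{b}$ theorem combined with It\^o--Michler gives $|\rho(S)|\geq 3$; a routine Clifford-theoretic argument (each irreducible character degree of the chief factor lifts to divide some irreducible character degree of $G$) yields $\rho(S)\sbs\rho(G)$, so $|\rho(G)|\geq 3$. This is exactly the lower bound used in the proof of Theorem~\ref{thm:99}. In the case $B(G)=C_{4}$, vertex-counting gives $|\rho(G)|=2$, contradicting $|\rho(G)|\geq 3$. In the case $B(G)=C_{6}$, one has $|\rho(G)|=3$, $|cd(G)|=4$, and $B(G)$ connected; here the same appeal to \cite[Corollary B]{MM} made in the proof of Theorem~\ref{thm:99}, case (i), forces $|cd(G)|\geq 5$, again a contradiction. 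Thus $G$ is solvable in either cycle length.

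Finally, with $G$ solvable and $|cd(G)|\leq 4$, the Taketa-type bound quoted as \cite{7} in Proposition~\ref{prop:1} delivers $dl(G)\leq|cd(G)|$, closing the argument.

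There is no real obstacle: the corollary is essentially a synthesis of Theorem~\ref{thm: 1}, the elementary lower bound $|\rho(G)|\geq 3$ for nonsolvable groups, the nonsolvable degree-count from \cite{MM} already leveraged in Theorem~\ref{thm:99}, and the classical bound \cite{7}. The only mild point requiring care is that Theorem~\ref{thm:99} does not directly apply, since it classifies nonsolvable groups whose $B(G)$ is a union of paths rather than a cycle, so the two admissible cycle lengths must be excluded individually by the arguments sketched above.
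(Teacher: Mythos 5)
Your proof is correct, but the solvability step takes a genuinely different route from the paper's. The paper observes that when $B(G)$ is $C_4$ or $C_6$ the common divisor graph $\Gamma(G)$ is complete (it is $K_2$ or $K_3$, since consecutive degree-vertices on the cycle share a prime and, for $C_6$, the three degrees are pairwise linked), and then invokes the theorem of Bianchi, Chillag, Lewis and Pacifici \cite{bian} that a finite group whose graph $\Gamma(G)$ is complete must be solvable; this disposes of both cycle lengths in one stroke. You instead exclude nonsolvability case by case: for $C_4$ the lower bound $|\rho(G)|\geq 3$ for nonsolvable groups (Burnside's $p^aq^b$ theorem plus It\^{o}--Michler and the containment $\rho(M/K)\subseteq\rho(G)$ for a nonabelian chief factor, exactly as in Theorem~\ref{thm:99}) contradicts $|\rho(G)|=2$, and for $C_6$ you combine connectivity of $B(G)$ with \cite[Corollary B]{MM} to force $|cd(G)|\geq 5$, contradicting the count $|cd(G)|=4$ read off the cycle. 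Both routes are valid; yours recycles machinery already deployed for Theorem~\ref{thm:99} at the cost of a case split, while the paper's single appeal to \cite{bian} is more uniform and shorter. One small citation point: for the concluding bound $dl(G)\leq|cd(G)|$ when $|cd(G)|\leq 4$ the appropriate reference is Garrison \cite{3}, which is what the paper cites, rather than \cite{7}, which concerns solvable groups with five character degrees; the Taketa-type inequality you need is the same in either formulation.
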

\begin{proof}
Let $B(G)=C_{n}$. By Theorem ~\ref{thm: 1}, we deduce that $n\in\{4,6\}$ and $\Gamma(G)$ is a complete graph. Since $\Gamma(G)$ is complete, ~\cite{bian} implies that $G$ is solvable. As $|cd(G)|\leq 4$ in the solvable group $G$, we conclude that $dl(G)\leq |cd(G)|\leq 4$ by ~\cite{3}.
\end{proof}
\begin{example}~\label{ex:1}
From ~\cite{LM} we know that for every pair of odd primes $p$ and $q$ such that $p$ is congruent to $1$ modula $3$ and $q$ is a prime divisor of $p+1$, there exists a solvable group $G$ such that $cd(G)=\{1,3q,p^{2}q,3p^{3}\}$. This gives an example of a solvable group $G$ whose bipartite divisor graph related to the set of character degrees, is a cycle of length $6$.
\end{example}

\begin{example}
 There are exactly $66$ groups of order $588$. Among these groups, there are exactly two nonabelian groups whose bipartite divisor graphs are  cycles of length four. These groups have $\{1,6,12\}$ as their irreducible character degrees set.
\end{example}
\begin{proposition}
Let $G$ be a nonabelian finite group, $P\in Syl_{p}(G)$ such that $|P|=p^{2}$, where $p\geq 7$, $p \neq 11$ and $[G:P]=12$. $B(G)$ is a cycle if and only if $cd(G)=\{1,6,12\}$.
\end{proposition}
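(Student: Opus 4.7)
The plan is to exploit the arithmetic of $|G|=12p^{2}$ to force $P$ to be a normal abelian Sylow subgroup, then apply It\^o's theorem to bound every $\chi(1)$ by $12$, and finally combine this with Theorem~\ref{thm: 1} to pin down $cd(G)$.

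First I would invoke Sylow's theorems: the number $n_{p}$ of Sylow $p$-subgroups of $G$ divides $[G:P]=12$ and satisfies $n_{p}\equiv 1\pmod{p}$. For $p\geq 7$ with $p\neq 11$, the only divisor of $12$ congruent to $1$ modulo $p$ is $1$ itself, so $n_{p}=1$ and $P\triangleleft G$. (The exclusion of $p=11$ is exactly what rules out $n_{11}=12$, since $12\equiv 1\pmod{11}$.) As $|P|=p^{2}$, the subgroup $P$ is abelian, and so It\^o's theorem gives $\chi(1)\mid[G:P]=12$ for every $\chi\in Irr(G)$. In particular $p\notin\rho(G)$, hence $\rho(G)\subseteq\{2,3\}$.

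For the forward direction, suppose $B(G)$ is a cycle. By Theorem~\ref{thm: 1}, $B(G)\in\{C_{4},C_{6}\}$; a $C_{6}$ would force $|\rho(G)|=3$, which contradicts $\rho(G)\subseteq\{2,3\}$, so $B(G)=C_{4}$. This forces $|\rho(G)|=|cd(G)\setminus\{1\}|=2$ with every nontrivial degree divisible by each prime of $\rho(G)$; the only divisors of $12$ that are multiples of $2\cdot 3=6$ are $6$ and $12$, so $\rho(G)=\{2,3\}$ and $cd(G)=\{1,6,12\}$. The converse is immediate: if $cd(G)=\{1,6,12\}$, the bipartition $\{2,3\}\cup\{6,12\}$ realises the complete bipartite graph $K_{2,2}=C_{4}$, which is a cycle.

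The only serious ingredient is It\^o's theorem, and the only subtlety is recognising that the stated arithmetic hypotheses $p\geq 7$, $p\neq 11$ are exactly what Sylow's theorem needs to guarantee $n_{p}=1$; I do not anticipate any further obstacle.
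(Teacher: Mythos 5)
Your proof is correct, and it reaches the conclusion by a genuinely shorter route than the paper. Both arguments begin the same way: Sylow's theorem (with $p\geq 7$, $p\neq 11$ ruling out $n_p=12$) gives $P\trianglelefteq G$, and $|P|=p^2$ makes $P$ abelian, so $p\notin\rho(G)$ and a bipartite cycle on at most two primes must be $C_4$ with $\rho(G)=\{2,3\}$. The divergence is in how each argument controls the degrees. The paper first proves that $G/P$ is abelian by running through the three nonabelian groups of order $12$ (each contributes a degree $2$ or $3$, which would be a degree-one vertex of $B(G)$), concludes $G\cong P\rtimes H$ with $H$ abelian, and only then invokes the orbit-length formula $cd(G)=\{[G:I_G(\lambda)]:\lambda\in Irr(N)\}$ from \cite{p} to see that every degree divides $12$. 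You obtain the divisibility $\chi(1)\mid [G:P]=12$ in one stroke from It\^o's theorem, with no need to analyse $G/P$ at all; the rest is the same arithmetic (the only divisors of $12$ divisible by $6$ are $6$ and $12$), and the converse direction is the same trivial observation that $\{2,3\}\cup\{6,12\}$ yields $K_{2,2}=C_4$. What your approach buys is economy and fewer external references; what the paper's approach buys is the extra structural information that $G/P$ is abelian, hence $G\cong P\rtimes\mathbb{Z}_{12}$ or $P\rtimes(\mathbb{Z}_2\times\mathbb{Z}_6)$, which is not needed for the stated equivalence but echoes the technique reused in Theorem~\ref{thm:98}. Your appeal to Theorem~\ref{thm: 1} to exclude $C_6$ is also slightly heavier than necessary: a cycle in a bipartite graph with one part of size at most $2$ is automatically a $4$-cycle.
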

\begin{proof}
 It is easy to see that $P$ is a normal abelian Sylow $p$-subgroup of $G$, so $p$ does not appear in the vertex set of $B(G)$. Therefore, $B(G)$ is a cycle of length four with $\rho(G)=\{2,3\}$. We claim that $\frac{G}{P}$ is abelian. Suppose $\frac{G}{P}$ is nonabelian. This implies that $\frac{G}{P}$ is one of the groups $A_4$, $D_{12}$, or $T\simeq \mathbb{Z}_3\rtimes\mathbb{Z}_4$. In all the above cases we can see that there exists $\chi$ in $Irr(G)$ such that $\chi(1)$ is either $2$ or $3$. Thus $\chi(1)$ is a vertex of degree one in $B(G)$, which contradicts that $B(G)$ is a cycle. Therefore $\frac{G}{P}$ is abelian. This implies that $G$ is either $P\rtimes \mathbb{Z}_{12}$ or $P\rtimes (\mathbb{Z}_2\times\mathbb{Z}_6)$.
 Now by ~\cite[Lemma 2.3]{p}, we have $cd(G)=\{\beta(1)[G:I_{G}(\lambda)] : \lambda\in Irr(P), \beta\in Irr(\frac{I_{G}(\lambda)}{P})\}$. Since $\frac{G}{P}$ is abelian, we conclude that $\frac{I_{G}(\lambda)}{P}$ is abelian and $cd(G)=\{[G:I_{G}(\lambda)] : \lambda\in Irr(P)\}$. As for each $\lambda\in Irr(P)$, $[G:I_{G}(\lambda)]$ divides $[G:P]=12$ and $B(G)$ is a cycle of length four, we conclude that $cd(G)=\{1,6,12\}$.
\end{proof}
\begin{remark}\label{rem:21}
Let $G$ be a finite group with $B(G)=C_{n}$. First suppose that $n=4$ and $\pi(G)=\rho(G)=\{p,q\}$. Since $p$ divides every nonlinear irreducible character degree of $G$, ~\cite[Corollary 12.2]{5} implies that $G$ has a normal $p$-complement $Q$. As $\pi(G)=\rho(G)=\{p,q\}$, $Q$ is the normal Sylow $q$-subgroup of $G$. Let $P$ be a Sylow $p$-subgroup of $G$. Similarly, we can see that $P$ is normal in $G$. Thus all the Sylow subgroups of $G$ are normal which implies that $G$ is nilpotent. Therefore $G$ is the direct product of its Sylow subgroups which contradicts the structure of $B(G)$. So in this case we always have $\rho(G)\subset \pi(G)$. But this is not always the case if $B(G)=C_{6}$, because as we can see in Example ~\ref{ex:1}, $G$ is a group generated by $P$, $\sigma$ and $\tau$, where $P$ is a Camina $p$-group of nilpotent class three and $\sigma$, $\tau$ are two commuting automorphisms of $P$ with orders $q$ and $3$, respectively. As it is explained in ~\cite{LM}, we have $|G|= p^{7}3q$. So in this case $\pi(G)=\rho(G)$.
\end{remark}
Suppose that $G$ is a finite group. The following theorem shows that if $B(G)$ is a cycle of length four, then $G$ has a normal abelian subgroup which explains the structure of $cd(G)$.
\begin{theorem}\label{thm:98}
Let $G$ be a finite group. Assume that $B(G)$ is a cycle of length $4$. There exists a normal abelian subgroup $N$ of $G$ such that $cd(G)=\{[G:I_{G}(\lambda)] : \lambda\in Irr(N)\}$.
\end{theorem}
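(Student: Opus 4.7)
The plan is to locate a normal abelian subgroup $N$ of $G$ with $G/N$ abelian and then invoke Clifford theory to force every character degree to coincide with some inertia index $[G:I_G(\lambda)]$. Since $B(G)=C_4$ is a $4$-cycle, $\rho(G)=\{p,q\}$ and $cd(G)=\{1,m,n\}$ with every prime-vertex adjacent to every degree-vertex; equivalently $m=p^a q^b$ and $n=p^c q^d$ with $a,b,c,d\geq 1$.

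For the construction of $N$: since $p$ divides every nonlinear irreducible character degree of $G$, \cite[Corollary 12.2]{5} produces a normal $p$-complement $Q$, and symmetrically a normal $q$-complement $P^*$. I would set $N:=Q\cap P^*$. A short order computation gives $\mathrm{lcm}(|Q|,|P^*|)=|G|$ and hence $G=QP^*$, so $N$ is the normal Hall $\{p,q\}'$-subgroup of $G$, which is nontrivial by Remark~\ref{rem:21}. For each $r\in\pi(N)$ no character degree of $G$ is divisible by $r$, and the It\^o--Michler theorem makes the Sylow $r$-subgroup of $G$ normal and abelian; hence $N$ is an internal direct product of normal abelian Sylow subgroups and is itself abelian.

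Next I would show $G/N$ is abelian. From $G=QP^*$ with $Q\cap P^*=N$ and both $Q,P^*\trianglelefteq G$, the quotient decomposes as an internal direct product $G/N=(P^*/N)\times(Q/N)$, where $P^*/N$ is a $p$-group and $Q/N$ is a $q$-group. Hence $cd(P^*/N)\subseteq cd(G/N)\subseteq cd(G)=\{1,m,n\}$, and character degrees of the $p$-group $P^*/N$ are $p$-powers. But the only $p$-power in $\{1,m,n\}$ is $1$, since $b,d\geq 1$; so $P^*/N$ is abelian, symmetrically $Q/N$ is abelian, and therefore $G/N$ is abelian.

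Finally, Clifford theory closes the argument. Since $(|N|,[G:N])=1$, for each $\lambda\in\irr{N}$ the coprime-extension theorem yields $\tilde\lambda\in\irr{I_G(\lambda)}$ extending $\lambda$; as $I_G(\lambda)/N$ is abelian, Gallagher's theorem identifies $\irr{I_G(\lambda)\mid\lambda}$ with $\{\tilde\lambda\mu:\mu\in\irr{I_G(\lambda)/N}\}$, each member of degree $1$. Inducing to $G$ produces an irreducible character of degree $[G:I_G(\lambda)]$, establishing one inclusion; conversely every $\chi\in\irr{G}$ lies above some $\lambda\in\irr{N}$ by Clifford's theorem and the same Gallagher description forces $\chi(1)=[G:I_G(\lambda)]$. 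The main obstacle will be the third paragraph: the graph-theoretic hypothesis $B(G)=C_4$ is used precisely to kill $p$- and $q$-power degrees in $G/N$, since it guarantees that both nontrivial degrees are divisible by \emph{both} primes; without this input $G/N$ could be nonabelian and the Clifford-theoretic conclusion would fail.
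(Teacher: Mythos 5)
Your proof is correct and follows essentially the same route as the paper's: both identify the nontrivial (by Remark~\ref{rem:21}) normal abelian Hall $\{p,q\}'$-subgroup $N$, show $G/N$ is abelian because $cd(G/N)\subseteq cd(G)$ contains no nontrivial prime power, and finish with Clifford theory. The only cosmetic differences are that you construct $N$ as the intersection of the two normal complements supplied by \cite[Corollary 12.2]{5} rather than directly as the product of the normal abelian Sylow subgroups for the primes outside $\rho(G)$, and that you prove the degree formula $cd(G)=\{[G:I_{G}(\lambda)] : \lambda\in Irr(N)\}$ from scratch via coprime extension and Gallagher where the paper cites \cite[Lemma 2.3]{p}.
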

\begin{proof}
Suppose that $G$ is a finite group of order $p_1^{a_1}p_2^{a_2}...p_l^{a_l}$. Without loss of generality, we may assume that $p_1=p$, $p_2=q$ and $B(G): p-m-q-n-p$. Thus for each $p_i\in \pi(G)\setminus\{p,q\}$, the Sylow $p_i$-subgroup of $G$ is a normal abelian subgroup of $G$. Let $N=P_3\times ...\times P_l$. Since $B(G)$ is a cycle, we deduce from Remark ~\ref{rem:21} that $G$ is not a $\{p,q\}$-group. Thus $N$ is a nontrivial normal abelian subgroup of $G$. Now $\frac{G}{N}$ is a $\{p,q\}$-group, so its bipartite divisor graph is not a cycle of length four. As $cd(\frac{G}{N})\subseteq cd(G)$, there exists no element of $cd(\frac{G}{N})$ which is a prime power. So for each nonlinear $\chi\in Irr(\frac{G}{N})$, $\chi(1)=p^{\alpha}q^{\beta}$, for some positive integers $\alpha$ and $\beta$. This implies that $\frac{G}{N}$ is the direct product of its Sylow subgroups which are nonabelian. But this contradicts the form of $cd(\frac{G}{N})$. Thus $\frac{G}{N}$ is abelian and $G=N\rtimes H$, where $H$ is the hall $\{p,q\}$-subgroup of $G$. Now by ~\cite[Lemma 2.3]{p}, we have $cd(G)=\{\beta(1)[G:I_{G}(\lambda)] : \lambda\in Irr(N), \beta\in Irr(\frac{I_{G}(\lambda)}{N})\}$. Since $\frac{G}{N}$ is abelian, we conclude that $\frac{I_{G}(\lambda)}{N}$ is abelian and $cd(G)=\{[G:I_{G}(\lambda)] : \lambda\in Irr(N)\}$.
\end{proof}
\begin{remark}
Let $G$ be a finite group of order $p_1^{a_1}p_2^{a_2}...p_l^{a_l}$. Assume that $B(G)$ is a cycle of length six and $\pi(G)\neq\rho(G)$. Let $N$ be as in the proof of Theorem ~\ref{thm:98}. So $N$ is nontrivial. If $\frac{G}{N}$ is abelian, similar to the above proof, we can explain $cd(G)$ with respect to $N$. So suppose that $\frac{G}{N}$ is not abelian. Since $B(G)$ is a cycle, each nonlinear irreducible character of $\frac{G}{N}$ is divisible by exactly two primes.  $B(\frac{G}{N})$ is a cycle if and only if it is a cycle of length six and this will occur if and only if $cd(\frac{G}{N})=cd(G)$. So we may assume that $cd(\frac{G}{N})\subset cd(G)$. This implies that $B(\frac{G}{N})$ is either a path of length two or four. According to the proof of Theorem ~\ref{thm:2}, we can see that $\frac{G}{N}$ has a normal $t$-complement for a prime $t\in\{p,q,r\}$ or it has a normal abelian subgroup whose index is the only nontrivial degree of $\frac{G}{N}$ (and so of $G$).
\end{remark}


\begin{thebibliography}{9}
\bibitem{bian}M. Bianchi, D.Chillag, M. L. Lewis, E. Pacifici, Character degree graphs that are complete graphs, Proceedings of The American Mathematical Society, 135 (2007), 671-676.

\bibitem{BDIP} D. Bubboloni, S. Dolfi, M. A. Iranmanesh and C. E. Praeger, On bipartite divisor graphs for
group conjugacy class sizes, Journal of Pure and Applied Algebra, 213 (2009), 1722-1734.

\bibitem{3}S. C. Garrison, On groups with a small number of character degrees, Ph.D. Thesis, University of Wisconsin, Madison, 1973.

\bibitem{6}R. Gow, Groups whose irreducible charatcer degrees are ordered by divisibility, Pacific Journal of Mathematics, 75 (1975), (1) 135-139.

\bibitem{IP}M. A. Iranmanesh, C. E. Praeger, Bipartite divisor graphs for integer subsets, Graphs and Combinatorics, 26 (2010), 95-105.

\bibitem{5} I. M. Isaacs, Character theory of finite groups, Academic Press, New York, (1976).

\bibitem{1} M. L. Lewis, A Solvable group whose character degree graph has diameter $3$, Proceedings of The American Mathematical Society, 130 (2001), (3) 625-630.

\bibitem{7}M. L. Lewis, Derived lengths of solvable groups having five
irreducible character degrees I, Algebras and Representation Theory, 4 (2001), 469-489.

\bibitem{L} M. L. Lewis, An overview of graphs associated with character degrees and conjugacy class sizes in finite groups, Rocky mountains journal of mathematics, 38 (2008), (1) 175-211.

\bibitem{LMe} M. L. Lewis and Q. Meng, Square character degree graphs yield direct products, Journal of Algebra, 349 (2012), 185-200.

\bibitem{LMNH}M. L. Lewis, A. Moret´o, G. Navarro, P. H. Tiep, Groups with just one character degree divisible by a given prime, Transactions of the American Mathematical Society, 361 (2009), 12 6521-6547.

\bibitem{LM}  M. L. Lewis, A. Moret´o, T. R. Wolf, Nondivisibility among character degrees, Journal of Group Theory, 8 (2005), 561-588.

\bibitem{LW} M. Lewis, D. White, Four-vertex degree graphs of nonsolvable groups, Journal of Alegbra, 378 (2013), 1-11.

\bibitem{HQ} H. LiGuo, Q. GuoHua, Graphs of nonsolvable groups with four degree-vertices, Science China Mathematics, 58 (2015), (6) 1305-1310.

\bibitem{MM}G. Malle, A. Moret\'{o}, Nonsolvable groups with few character degrees, Journal of Algebra, 294 (2005), 117-126.

\bibitem{8}J. K. McVey, Bounding graph diameters of solvable groups,
Journal of Algebra, 280 (2004), 415-425.

\bibitem{p} A. Previtali, Orbit lengths and character degrees in a $p$-Sylow subgroup of some classical Lie groups, Journal of Algebra, 177 (1995), 658-675.

\bibitem{2}J. M. Riedl, Fitting heights of odd-order groups with few
character degrees, Journal of Algebra, 267 (2003), (2) 421-442.

\bibitem{T} B. Taeri, Cycles and bipartite graph on conjugacy class of groups,
Rendiconti del Seminario Matematico
della Università di Padova, 123 (2010), 233-247.

\bibitem{H} H. P. Tong-Viet, Groups whose prime graph has no triangles, Journal of Algebra, 378 (2013), 196-206.





\end{thebibliography}
\end{document}